\theoremstyle{plain}
\newtheorem{theorem}{Theorem}
\newtheorem{lemma}{Lemma}
\newtheorem{definition}{Definition}
\theoremstyle{remark}
\newtheorem{remark}{Remark}
\newtheorem{example}{Example}
\def \n {\nabla}
\def \nt {\widetilde{\nabla}}
\def \Mt {\widetilde{M}}
\def \Rt {\widetilde{R}}
\def\<{ \left < }
\def\>{ \right > }
\def\R{\mathbb{R}}
\def\E{\mathbb{E}}
\def\H{\mathbb{H}}
\def\S{\mathbb{S}}
\def \Nil {\mathrm{Nil}_3}
\begin{document}

\title[Constant angle surfaces]
{Constant angle surfaces in the Heisenberg group}

\author[J.~Fastenakels]{Johan Fastenakels}
\thanks{The first named author is a research assistant of the Research Foundation - Flanders (FWO)}
\author[M.~I.~Munteanu]{Marian Ioan Munteanu}
\thanks{The second named author was supported by Grant PN-II ID 398/2007-2010 (Romania)}
\author[J.~Van der Veken]{Joeri Van der Veken}
\thanks{The third named author is a postdoctoral researcher supported by the Research Foundation - Flanders (FWO)}

\address{Katholieke Universiteit Leuven\\ Celestijnenlaan 200 B\\ B-3001 Leuven\\ Belgium}
\email[J.~Fastenakels]{johan.fastenakels@wis.kuleuven.be}
\email[J.~Van der Veken]{joeri.vanderveken@wis.kuleuven.be}

\address{``Al.I.Cuza'' University of Iasi\\ Bd. Carol I, n. 11\\
700506 - Iasi\\ Romania}
\email[M.~I.~Munteanu]{marian.ioan.munteanu@gmail.com}

\begin{abstract}

In this article we generalize the notion of constant angle surfaces
in $\mathbb{S}^2\times\mathbb{R}$ and $\mathbb{H}^2\times\mathbb{R}$
to general Bianchi-Cartan-Vranceanu spaces. We show that these
surfaces have constant Gaussian curvature and we give a complete
local classification in the Heisenberg group.

\end{abstract}

\keywords{Heisenberg group, Bianchi-Cartan-Vranceanu space, constant
angle surface}

\subjclass[2000]{53B25}

\maketitle

\section{Introduction}

Differential geometry of submanifolds started with the study of
surfaces in the 3-dimensional Euclidean space. This study has been
generalized in two ways. One can generalize the dimension and the
codimension of the submanifold, but one can also generalize the
ambient space. The most popular ambient spaces are, besides the
Euclidean space, the sphere and the hyperbolic space. These spaces
are called real space forms and are the easiest Riemannian manifolds
from geometric point of view. With the work of, among others,
Thurston, 3-dimensional homogeneous spaces have gained interest. A
homogeneous space is a Riemannian manifold such that for every two
points $p$ and $q$, there exists an isometry mapping $p$ to $q$.
Roughly speaking, this means that the space looks the same at every
point.

There are three classes of 3-dimensional homogeneous spaces
depending on the dimension of the isometry group. This dimension can
be 3, 4 or 6. In the simply connected case, dimension 6 corresponds
to one of the three real space forms and dimension 3 to a general
simply connected 3-dimensional Lie group with a left-invariant
metric. In this article we restrict ourselves to dimension 4. A
3-dimensional homogeneous spaces with a 4-dimensional isometry group
is locally isometric to (an open part of) $\R^3$, equipped with a
metric depending on two real parameters. Since this two-parameter
family of metrics first appeared in the works of Bianchi, Cartan and
Vranceanu, these spaces are often referred to as
Bianchi-Cartan-Vranceanu spaces, or BCV-spaces for short. More
details can be found in the last section. Some well-known examples
of BCV-spaces are the Riemannian product spaces
$\mathbb{S}^2\times\mathbb{R}$ and $\mathbb{H}^2\times\mathbb{R}$
and the 3-dimensional Heisenberg group.

There exists a Riemannian submersion from a general BCV-space onto a
surface of constant Gaussian curvature. Since this submersion
extends the classical Hopf fibration
$\pi:\S^3(\kappa/4)\rightarrow\S^2(\kappa)$, it is also called the
Hopf fibration. Hence, any BCV-space is foliated by 1-dimensional
fibers of the Hopf fibration. Now consider a surface immersed in a
BCV-space and consider at every point of the surface the angle
between the unit normal and the fiber of the Hopf fibration through
this point. The existence and uniqueness theorem for immersions into
BCV-spaces, proven in \cite{D}, shows that this angle function is
one of the fundamental invariants for a surface in a BCV-space.
Hence, it is a very natural problem to look for those surfaces for
which this angle function is constant.

The constant angle surfaces in $\S^2 \times \R$ and $\H^2 \times \R$
have been classified in \cite{DFVdVV} and \cite{DM1}, see also
\cite{DM2}. In these cases, the Hopf fibration is just the natural
projection $\pi : \S^2 \times \R \rightarrow \S^2$, respectively
$\pi : \H^2 \times \R \rightarrow \H^2$. For an overview of constant
angle surfaces in $\E^3$, i.e., surfaces for which the unit normal
makes a constant angle with a fixed direction in $\E^3$, we refer to
\cite{MN}. In the present article, we show that all constant angle
surfaces in any BCV-space have constant Gaussian curvature and we
give a complete local classification of constant angle surfaces in
the Heisenberg group by means of an explicit parametrization. We
also give some partial results for a classification in general
BCV-spaces.

\section{Preliminaries}

\subsection{The Heisenberg group}

Let $(V,\omega)$ be a symplectic vector space of dimension $2n$.
Then the associated Heisenberg group is defined as the set
$V\times\R$ equipped with the operation
$$(v_1,t_1)\ast (v_2,t_2)=\left(v_1+v_2, t_1+t_2+\frac{1}{2}\omega(v_1,v_2)\right).$$
From now on, we restrict ourselves to the 3-dimensional Heisenberg
group coming from $\R^2$ with the canonical symplectic form
$\omega((x,y),(\overline{x},\overline{y}))=x\overline{y}-\overline{x}y$,
i.e., we consider $\R^3$ with the group operation
$$(x,y,z)\ast(\overline{x},\overline{y},\overline{z})=\left(x+\overline{x},\ y+\overline{y},\ z+\overline{z}+\frac{x\overline{y}}{2}-\frac{\overline{x}y}{2}\right).$$
Remark that the mapping
$$\R^3\rightarrow\left\{\left.\left(\begin{array}{ccc}1&a&b\\0&1&c\\0&0&1\end{array}\right)\ \right|\ a,b,c\in\R\right\} : (x,y,z)\mapsto\left(\begin{array}{ccc}1&x&z+\frac{xy}{2}\\0&1&y\\0&0&1\end{array}\right)$$
is an isomorphism between $(\R^3,\ast)$ and a subgroup of
$\mathrm{GL}(3,\R)$. For every non-zero real number $\tau$ the
following Riemannian metric on $(\R^3,\ast)$ is left-invariant:
$$ds^2=dx^2+dy^2+4\tau^2\left(dz+\frac{y\,dx-x\,dy}{2}\right)^2.$$
After the change of coordinates $(x,y,2\tau z)\mapsto(x,y,z)$, this
metric is expressed as
\begin{equation} \label{metricNil}
ds^2=dx^2+dy^2+\left(dz+\tau(y\,dx-x\,dy)\right)^2.
\end{equation}
From now on, we denote by $\Nil$ the group $(\R^3,\ast)$ with the
metric \eqref{metricNil}. By some authors, the notation $\Nil$ is
only used if $\tau=\frac12$.

In the following lemma, we give a left-invariant orthonormal frame
on $\Nil$ and describe the Levi Civita connection and the Riemann
Christoffel curvature tensor in terms of this frame.
\begin{lemma}\label{Lem1}
The following vector fields form a left-invariant orthonormal frame
on $\Nil$:
$$e_1 = \partial_x-\tau y\partial_z,\quad
e_2 = \partial_y+\tau x\partial_z,\quad e_3 = \partial_z.$$ The
geometry of $\Nil$ can be described in terms of this frame as
follows.
\begin{itemize}
\item[(i)] These vector fields satisfy the commutation relations
$$[e_1,e_2]= 2\tau e_3, \qquad [e_2,e_3]=0, \qquad [e_3,e_1]=0.$$
\item[(ii)] The Levi Civita connection $\nt$ of $\Nil$ is given by
$$\begin{array}{lll} \widetilde{\nabla}_{e_1}e_1=0, & \widetilde{\nabla}_{e_1}e_2=\tau e_3, & \widetilde{\nabla}_{e_1}e_3=-\tau e_2, \\
                     \widetilde{\nabla}_{e_2}e_1=-\tau e_3, & \widetilde{\nabla}_{e_2}e_2=0, & \widetilde{\nabla}_{e_2}e_3=\tau e_1, \\
                     \widetilde{\nabla}_{e_3}e_1=-\tau e_2, & \widetilde{\nabla}_{e_3}e_2=\tau e_1, & \widetilde{\nabla}_{e_3}e_3=0.
\end{array}$$
\item[(iii)] The Riemann Christoffel curvature tensor $\Rt$ of
$\Nil$ is determined by
\begin{eqnarray*}
\widetilde{R}(X,Y)Z &=& -3\tau^2(\langle Y,Z\rangle X-\langle X,Z\rangle Y)\\
&& +4\tau^2(\langle Y,e_3 \rangle\langle Z,e_3\rangle X - \langle X,e_3 \rangle\langle Z,e_3 \rangle Y\\
&& +\langle X,e_3 \rangle\langle Y,Z \rangle e_3 - \langle Y,e_3
\rangle\langle X,Z \rangle e_3)
\end{eqnarray*}
for $p\in \Nil$ and $X,Y,Z\in T_p\Nil$.
\end{itemize}
\end{lemma}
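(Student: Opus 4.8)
The plan is to verify the four assertions by direct computation in the global coordinates $(x,y,z)$ in which \eqref{metricNil} is written. Orthonormality is immediate: writing $\eta:=dz+\tau(y\,dx-x\,dy)$, one checks that $dx,dy,\eta$ form the coframe dual to $e_1,e_2,e_3$ (e.g.\ $\eta(e_1)=-\tau y+\tau y=0$, $\eta(e_3)=1$), and since \eqref{metricNil} equals $dx^2+dy^2+\eta^2$ this gives $\langle e_i,e_j\rangle=\delta_{ij}$. For left-invariance I would first carry the group operation through the change of coordinates $(x,y,2\tau z)\mapsto(x,y,z)$ used to obtain \eqref{metricNil}, after which it reads $(x,y,z)\ast(\bar x,\bar y,\bar z)=(x+\bar x,\,y+\bar y,\,z+\bar z+\tau(x\bar y-\bar x y))$, and then compute the differential of a left translation $L_{(a,b,c)}$: one gets $dL_{(a,b,c)}(\partial_x)=\partial_x-\tau b\,\partial_z$, $dL_{(a,b,c)}(\partial_y)=\partial_y+\tau a\,\partial_z$, $dL_{(a,b,c)}(\partial_z)=\partial_z$. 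Since this replaces the $\partial_z$-coefficients $-\tau y$ and $\tau x$ of $e_1,e_2$ by $-\tau(b+y)$ and $\tau(a+x)$, i.e.\ by their values at the translated point, it follows that $dL_{(a,b,c)}e_i=e_i$. Assertion (i) is then a one-line bracket computation: only the $\partial_z$-component can be non-zero, $[e_1,e_2]=\big(e_1(\tau x)-e_2(-\tau y)\big)\partial_z=2\tau e_3$, and $[e_2,e_3]=[e_3,e_1]=0$.

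For (ii) I would apply the Koszul formula. Since the inner products $\langle e_i,e_j\rangle$ are constant, it reduces to
\[
2\langle\nt_{e_i}e_j,e_k\rangle=\langle[e_i,e_j],e_k\rangle-\langle[e_j,e_k],e_i\rangle+\langle[e_k,e_i],e_j\rangle ,
\]
and substituting the brackets from (i) and reading off components in the orthonormal frame produces exactly the nine displayed formulas for $\nt$.

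For (iii) I would compute $\Rt(e_i,e_j)e_k=\nt_{e_i}\nt_{e_j}e_k-\nt_{e_j}\nt_{e_i}e_k-\nt_{[e_i,e_j]}e_k$ for the essentially different index triples, using (i) and (ii); the antisymmetry of $\Rt$ in its first two arguments together with the pair symmetry $\langle\Rt(X,Y)Z,W\rangle=\langle\Rt(Z,W)X,Y\rangle$ cut this down to a handful of cases, e.g.\ $\Rt(e_1,e_2)e_2=-3\tau^2 e_1$, $\Rt(e_1,e_3)e_3=\tau^2 e_1$, $\Rt(e_2,e_3)e_3=\tau^2 e_2$, $\Rt(e_1,e_2)e_3=0$. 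One then evaluates the proposed closed expression on the same triples and checks that it yields identical values; since both sides of the claimed identity are $\R$-trilinear tensor fields in $(X,Y,Z)$, agreement on the frame forces agreement for all $X,Y,Z\in T_p\Nil$. The whole argument is elementary; the only part that demands some care is the bookkeeping in (iii) --- keeping track of signs and of the balance between the isotropic $-3\tau^2$ term and the $+4\tau^2$ corrections carrying factors $\langle\cdot,e_3\rangle$ --- but this is a short finite verification once (i) and (ii) are in hand.
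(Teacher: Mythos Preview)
Your proof sketch is correct; each of the four verifications (orthonormality via the dual coframe, left-invariance via the differential of $L_{(a,b,c)}$, brackets, Koszul formula, and the curvature check on basis triples) is sound, and the sample curvature values you list ($\Rt(e_1,e_2)e_2=-3\tau^2 e_1$, $\Rt(e_i,e_3)e_3=\tau^2 e_i$ for $i=1,2$, $\Rt(e_1,e_2)e_3=0$) match both the direct computation from (ii) and the closed formula in (iii). The paper itself states this lemma without proof, treating it as a routine calculation, so your sketch is exactly the kind of direct verification the authors suppress.
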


It is clear that the Killing vector field $e_3$ plays an important
role in the geometry of $\Nil$. In fact, the integral curves of
$e_3$ are the fibers of the Hopf fibration:
\begin{definition}
The mapping $\Nil\to\E^2:(x,y,z)\mapsto(x,y)$ is a Riemannian
submersion, called the \emph{Hopf fibration}. The inverse image of a
curve in $\E^2$ under the Hopf fibration is called a \emph{Hopf
cylinder}.
\end{definition}

\subsection{Surfaces in the Heisenberg group}

Let $F:M\rightarrow\Nil$ be an isometric immersion of an oriented
surface in the Heisenberg group. Denote by $N$ a unit normal vector
field, by $S$ the associated shape operator, by $\theta$ the angle
between $e_3$ and $N$ and by $T$ the projection of $e_3$ onto the
tangent plane to $M$, i.e. the vector field $T$ on $M$ such that
$F_{\ast}T+\cos\theta\,N=e_3$.

If we work locally, we may assume $\theta\in [0,\frac{\pi}2]$. Take
$p\in M$ and $X,Y,Z\in T_pM$. Then the equations of Gauss and
Codazzi are given respectively by
\begin{eqnarray}
R(X,Y)Z &=& -3\tau^2(\langle Y,Z\rangle X-\langle X,Z\rangle Y)\label{GaussNil}\\
        & & +4\tau^2(\langle Y,T \rangle\langle Z,T \rangle X - \langle X,T \rangle\langle Z,T \rangle Y\nonumber\\
        & & +\langle X,T \rangle\langle Y,Z \rangle T - \langle Y,T \rangle\langle X,Z \rangle T)\nonumber\\
        & & +\langle SY,Z \rangle SX - \langle SX,Z \rangle SY
        \nonumber
\end{eqnarray}
and
\begin{equation}\label{CodazziNil}
\nabla_XSY-\nabla_YSX-S[X,Y]=-4\tau^2\cos\theta(\langle Y,T\rangle
X-\langle X,T\rangle Y).
\end{equation}
Remark that (\ref{GaussNil}) is equivalent to the following relation
between the Gaussian curvature $K$ of the surface and extrinsic data
of the immersion:
\begin{equation}
K=\det S+\tau^2-4\tau^2\cos^2\theta.\label{Gauss2Nil}
\end{equation}
Finally, we remark that also the following structure equations hold:
\begin{eqnarray}
\nabla_XT &=& \cos\theta(SX-\tau JX),\label{Struct1Nil}\\
X[\cos\theta] &=& -\langle SX-\tau JX,T\rangle,\label{Struct2Nil}
\end{eqnarray}
where $JX$ is defined by $JX:=N\times X$, i.e., $J$ denotes the
rotation over $\frac{\pi}2$ in $T_pM$.

The four equations \eqref{CodazziNil}, \eqref{Gauss2Nil},
\eqref{Struct1Nil} and \eqref{Struct2Nil} are called the
compatibility equations since it was proven in \cite{D} that they
are necessary and sufficient conditions to have an isometric
immersion of a surface into $\Nil$.

\section{First results on constant angle surfaces in $\Nil$}

In this section we define constant angle surfaces in $\Nil$ and we
give some immediate consequences of this assumption.

\begin{definition}
We say that a surface in the Heisenberg group $\Nil$ is a
\emph{constant angle surface} if the angle $\theta$ between the unit
normal and the direction $e_3$ tangent to the fibers of the Hopf
fibration is the same at every point.
\end{definition}

\begin{lemma}\label{Lem2}
Let $M$ be a constant angle surface in $\Nil$. Then the following
statements hold.
\begin{itemize}
\item[(i)] With respect to the basis $\{T,JT\}$, the shape operator is
given by $$S=\left(\begin{array}{cc}0 & -\tau \\ -\tau & \lambda
\end{array}\right)$$ for some function $\lambda$ on $M$.
\item[(ii)] The Levi Civita connection of the surface is determined by
$$\begin{array}{ll}\n_TT=-2\tau\cos\theta\,JT, &
\n_{JT}T=\lambda\cos\theta\,JT,\\ \n_TJT=2\tau\cos\theta\,T, &
\n_{JT}JT=-\lambda\cos\theta\,T.\end{array}$$
\item[(iii)] The Gaussian curvature of the surface is a negative constant given by
$$K=-4\tau^2\cos^2\theta.$$
\item[(iv)] The function $\lambda$ satisfies the following PDE:
$$T[\lambda]+\lambda^2\cos\theta+4\tau^2\cos^3\theta=0.$$
\end{itemize}
\end{lemma}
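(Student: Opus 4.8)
The plan is to exploit the compatibility equations from the Preliminaries together with the constancy of $\theta$. Since $\theta$ is constant, the left-hand side of \eqref{Struct2Nil} vanishes, so $\langle SX-\tau JX,T\rangle=0$ for every tangent $X$. Testing this against $X=T$ and $X=JT$ (and using $JT\times T$-type bookkeeping, i.e.\ $JT=N\times T$, $J(JT)=-T$) gives two scalar relations: $\langle ST,T\rangle=0$ and $\langle S(JT),T\rangle=\langle ST,JT\rangle=-\tau$. Writing the symmetric operator $S$ in the orthonormal-up-to-scaling basis $\{T,JT\}$ (note $|T|^2=\sin^2\theta$, but one normalizes; the statement of (i) is in the basis $\{T,JT\}$ as written), these two relations pin down three of the four entries of $S$, leaving $\langle S(JT),JT\rangle=:\lambda$ as the only free function. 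This proves (i).

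For (ii), I would feed the now-known shape operator into the structure equation \eqref{Struct1Nil}, $\n_XT=\cos\theta(SX-\tau JX)$. Taking $X=T$ gives $\n_TT=\cos\theta(ST-\tau JT)=\cos\theta(-\tau JT-\tau JT)=-2\tau\cos\theta\,JT$, and taking $X=JT$ gives $\n_{JT}T=\cos\theta(S(JT)-\tau J(JT))=\cos\theta(-\tau T+\lambda JT+\tau T)=\lambda\cos\theta\,JT$. The remaining two connection coefficients follow by differentiating the relations $\langle T,T\rangle=\sin^2\theta=\text{const}$, $\langle T,JT\rangle=0$, $\langle JT,JT\rangle=\sin^2\theta$, which force $\n_TJT=2\tau\cos\theta\,T$ and $\n_{JT}JT=-\lambda\cos\theta\,T$; one should double-check compatibility with $J$ being parallel rotation, but this is routine.

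With the connection in hand, (iii) is immediate: either compute $K$ directly from the bracket $[T,JT]$ and the coefficients in (ii) via $R(T,JT)JT$, or — more cheaply — substitute $\det S$ from (i), namely $\det S=-\tau^2$ (up to the normalization factor $\sin^2\theta$ coming from $|T|^2$; the clean statement is that $\det S=-\tau^2$ once $S$ is read off correctly), into the Gauss relation \eqref{Gauss2Nil}: $K=\det S+\tau^2-4\tau^2\cos^2\theta=-\tau^2+\tau^2-4\tau^2\cos^2\theta=-4\tau^2\cos^2\theta$, a negative constant since $\tau\neq0$ and (working locally) $\cos\theta\neq0$ on a genuine constant angle surface; the degenerate case $\cos\theta\equiv 0$ would make $M$ a Hopf cylinder and should be flagged separately. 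Finally, for (iv) I would apply the Codazzi equation \eqref{CodazziNil} with $X=T$, $Y=JT$: the right-hand side is $-4\tau^2\cos\theta\,\langle JT,T\rangle\,T+4\tau^2\cos\theta\,\langle T,T\rangle\,JT$; expanding the left-hand side $\n_T S(JT)-\n_{JT}S(T)-S[T,JT]$ using the connection from (ii), the expression for $[T,JT]=\n_TJT-\n_{JT}T$, and $T[\lambda]$, $JT[\lambda]$, and then reading off the $JT$-component, yields $T[\lambda]+\lambda^2\cos\theta+4\tau^2\cos^3\theta=0$. The main obstacle is purely organizational: keeping the normalization of $T$ (which has length $\sin\theta$, not $1$) consistent throughout, so that the matrix in (i) and the PDE in (iv) come out with exactly the stated coefficients; once the basis conventions are fixed, every step is a direct substitution.
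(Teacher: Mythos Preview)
Your proposal is correct and follows exactly the route the paper takes: (i) from \eqref{Struct2Nil} and symmetry of $S$, (ii) from \eqref{Struct1Nil} together with (i) and the constant inner products $\langle T,T\rangle=\langle JT,JT\rangle=\sin^2\theta$, $\langle T,JT\rangle=0$, (iii) from \eqref{Gauss2Nil} with $\det S=-\tau^2$, and (iv) from Codazzi \eqref{CodazziNil} (the paper also notes the equivalent route via (ii) and (iii)). Your only unnecessary worry is the normalization of $\det S$: since $S$ is a linear operator on $T_pM$, its determinant is basis-independent, so $\det S=-\tau^2$ holds directly from the matrix in (i) without any $\sin^2\theta$ correction; likewise your equality $\langle ST,JT\rangle=-\tau$ should read $-\tau\sin^2\theta$, but once you pass to coefficients in the basis $\{T,JT\}$ the $\sin^2\theta$ cancels and the matrix entries are exactly as stated.
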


\begin{proof} The first
statement follows from equation \eqref{Struct2Nil} and the symmetry
of the shape operator. The second statement follows from equation
\eqref{Struct1Nil}, (i) and the equalities
$\<T,T\>=\<JT,JT\>=\sin^2\theta$ and $\<T,JT\>=0$. The third
statement follows from the equation of Gauss \eqref{Gauss2Nil} and
(i). The last statement follows from (ii) and (iii), or from the
equation of Codazzi \eqref{CodazziNil} and (i).
\end{proof}

\section{Classification of constant angle surfaces in $\Nil$}

By using Lemma \ref{Lem2}, we can prove our main result, namely the
complete local classification of all constant angle surfaces in the
Heisenberg group.

\begin{theorem}\label{Theo1}
Let $M$ be a constant angle surface in the Heisenberg group $\Nil$.
Then $M$ is isometric to an open part of one of the following types
of surfaces:
\begin{itemize}
\item[(i)] a Hopf-cylinder,
\item[(ii)] a surface given by
\begin{eqnarray*}
F(u,v) &=& \left(\frac{1}{2\tau}\tan\theta\sin u+f_1(v),\
-\frac{1}{2\tau}\tan\theta\cos u+f_2(v),\right.\\
&& \left.-\frac{1}{4\tau}\tan^2\theta\,u-\frac{1}{2}\tan\theta\cos
uf_1(v)-\frac{1}{2}\tan\theta\sin uf_2(v)-\tau f_3(v)\right),
\end{eqnarray*}
with $(f_1')^2+(f_2')^2=\sin^2\theta$ and
$f'_3(v)=f'_1(v)f_2(v)-f_1(v)f'_2(v)$. Here, $\theta$ denotes the
constant angle.
\end{itemize}
\end{theorem}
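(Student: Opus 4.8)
The plan is to integrate the structure equations from Lemma \ref{Lem2} step by step, treating $\{T, JT\}$ as a (non-coordinate) frame on $M$ and reconstructing the immersion $F$ into $\Nil$. First I would rule out the case $\cos\theta = 0$: if $\theta = \frac{\pi}{2}$, then $e_3$ is everywhere tangent to $M$, so $M$ is invariant under the flow of the Killing field $e_3$, hence a Hopf cylinder; this gives case (i). So from now on assume $\cos\theta \neq 0$, so $\sin\theta \neq 0$ as well (if $\sin\theta = 0$ then $N = \pm e_3$ and the surface would be an integral surface of the distribution $e_3^\perp$, which is non-integrable by the bracket relation $[e_1,e_2] = 2\tau e_3$, a contradiction).

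Next I would set up good coordinates. Since $\n_T T = -2\tau\cos\theta\, JT$ has no $T$-component, the integral curves of $T/\sin^2\theta$ (or a suitable reparametrization) are geodesics of $M$; more usefully, I want a coordinate $u$ along $T$ and $v$ labelling the integral curves of $JT$, or vice versa. The cleanest route: look for coordinates $(u,v)$ with $\partial_u$ proportional to $T$. From Lemma \ref{Lem2}(ii), $[\,T, JT\,] = \n_T JT - \n_{JT} T = 2\tau\cos\theta\, T - \lambda\cos\theta\, JT$. One then checks that $E_1 := \frac{1}{\sin\theta} T$ and $E_2 := \frac{1}{\sin\theta} JT$ form an orthonormal frame whose bracket can be normalized; alternatively, rescale to find vector fields $U, W$ with $[U,W] = 0$ spanning the same directions, giving coordinates $(u,v)$ with $\partial_u = U$, $\partial_v = W$. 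This is where Lemma \ref{Lem2}(iv), the PDE $T[\lambda] + \lambda^2\cos\theta + 4\tau^2\cos^3\theta = 0$, enters: it is exactly the integrability condition that makes this coordinate change consistent, and it lets me solve for $\lambda$ explicitly along the $T$-curves (the ODE $\dot\lambda = -\cos\theta(\lambda^2 + 4\tau^2\cos^2\theta)$ in the arclength-type parameter has solution of the form $\lambda = -2\tau\cos\theta\cot(\,\cdot\,)$ after the substitution, matching the $\tan\theta\, u$ and $u$-dependence visible in the stated parametrization).

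With coordinates in hand, I would write $F = F(u,v)$ and express $F_u$ and $F_v$ in terms of the frame $\{e_1, e_2, e_3\}$ of $\Nil$ using: (a) $F_u, F_v$ are tangent to $M$, hence combinations of $T$ and $JT$; (b) the decomposition $F_\ast T + \cos\theta\, N = e_3$; (c) the shape operator from Lemma \ref{Lem2}(i), which via the Gauss formula $\widetilde\nabla_X F_\ast Y = F_\ast(\nabla_X Y) + \langle SX, Y\rangle N$ together with the explicit $\widetilde\nabla$ from Lemma \ref{Lem1}(ii) turns the conditions into a first-order linear PDE system for the $\Nil$-valued (equivalently $\R^3$-valued, in the $e_i$ components) unknowns. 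Concretely I expect: the $u$-dependence is governed by a constant-coefficient linear system once $\lambda$ is substituted, producing the $\sin u$, $\cos u$, and linear-in-$u$ terms; integrating in $u$ leaves three functions of $v$ as "constants of integration," which are the $f_1(v), f_2(v), f_3(v)$ in the statement. Then the remaining equations ($F_v$ tangent, of the correct length, and compatible) impose $(f_1')^2 + (f_2')^2 = \sin^2\theta$ and the relation $f_3' = f_1' f_2 - f_1 f_2'$; the latter comes precisely from requiring $F_v$ to have no spurious $e_3$-component after accounting for the $\tau(y\,dx - x\,dy)$ term in the metric \eqref{metricNil}, i.e. from the group structure of $\Nil$.

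The main obstacle will be the bookkeeping in choosing the right reparametrization of the frame so that the coordinate vector fields actually commute and the resulting ODE in $u$ has constant coefficients — this is the point where $\lambda$'s explicit form must be plugged in at just the right moment, and getting the normalization (the factors of $\frac{1}{2\tau}\tan\theta$, $\frac{1}{4\tau}\tan^2\theta$) exactly right requires care. A secondary subtlety is the converse direction: one should verify that every $F$ of the stated form is indeed an immersion with constant angle $\theta$ — this is a direct check that $F_u \times F_v$ (in the $\Nil$ metric) makes the prescribed constant angle with $e_3$, using the constraints on $f_1, f_2, f_3$, and it also confirms that no solutions were lost. Throughout, I would lean on the existence-and-uniqueness theorem of \cite{D} (the compatibility equations \eqref{CodazziNil}, \eqref{Gauss2Nil}, \eqref{Struct1Nil}, \eqref{Struct2Nil}) to guarantee that the surface reconstructed from the integrated frame data is the original $M$ up to an isometry of $\Nil$, so that solving the PDE system really does yield a complete classification.
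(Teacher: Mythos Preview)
Your outline is correct and matches the paper's strategy: handle the extreme angles, take coordinates with $\partial_u=T$ and $\partial_v=aT+bJT$, use Lemma~\ref{Lem2}(iv) to solve for $\lambda$ (and then for $a,b$ from $[\partial_u,\partial_v]=0$), and integrate the immersion directly. The one concrete device you leave implicit is the paper's parametrization of the unit normal as $N=\sin\theta\cos\phi\,e_1+\sin\theta\sin\phi\,e_2+\cos\theta\,e_3$ for a scalar function $\phi$ on $M$: comparing $-\widetilde\nabla N$ with Lemma~\ref{Lem2}(i) yields the simple system $T[\phi]=-2\tau\cos^2\theta$, $(JT)[\phi]=\lambda$, whose solution $\phi(u)=-2\tau\cos^2\theta\,u+c$ makes $T$ and $JT$ explicit in the ambient frame, after which $F$ is obtained by direct quadrature and the reparametrization $-2\tau\cos^2\theta\,u+c\mapsto u$ --- no appeal to \cite{D} is needed.
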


\begin{proof}
Let $M$ be a constant angle surface in $\Nil$. If
$\theta=\frac{\pi}2$, the surface is of the first type mentioned in
the theorem. Moreover, $\theta=0$ gives a contradiction with Lemma
\ref{Lem1} (i), since $\tau\neq 0$. From now on, we assume that
$\theta$ lies strictly between $0$ and $\frac{\pi}2$. There exists a
locally defined real function $\phi$ on $M$ such that the unit
normal on $M$ is given by
\begin{equation}
\label{eq1} N=\sin\theta\cos\phi\,e_1+\sin\theta\sin\phi\,
e_2+\cos\theta\,e_3.
\end{equation}
Remark that with this notation, we have
\begin{align}
& T = -\sin\theta(\cos\theta\cos\phi\,e_1+\cos\theta\sin\phi\,e_2-\sin\theta\,e_3),\label{eq2}\\
& JT = \sin\theta(\sin\phi\,e_1-\cos\phi\,e_2).\label{eq3}
\end{align}

By a straightforward computation, using Lemma \ref{Lem1} (ii),
\eqref{eq1}, \eqref{eq2} and \eqref{eq3}, we obtain that the shape
operator $S$ satisfies
\begin{align*}
& ST=-\nt_TN=(T[\phi]-\tau\sin^2\theta+\tau\cos^2\theta)JT,\\
& SJT =-\nt_{JT}N=-\tau T+(JT)[\phi]JT.
\end{align*}
Comparing this to Lemma \ref{Lem2} (i), gives
\begin{equation}\label{eq4}
\left\{\begin{array}{l} T[\phi] = -2\tau\cos^2\theta, \\
(JT)[\phi] = \lambda. \end{array}\right.
\end{equation}
The integrability condition for this system of equations is
precisely the PDE from Lemma \ref{Lem2} (iv). Remark that the
solutions of this system describe the possible tangent distributions
to $M$.

In order to solve the system \eqref{eq4}, let us choose coordinates
$(u,v)$ on $M$ such that $\partial_u=T$ and $\partial_v=aT+bJT$, for
some real functions $a$ and $b$ which are locally defined on $M$.
The condition $[\partial_u,\partial_v]=0$ is equivalent to the
following system of equations:
\begin{equation}
\label{eq5}
\left\{\begin{array}{l} \partial_ua=-2\tau b\cos\theta, \\
\partial_ub=\lambda b\cos\theta. \end{array}\right.
\end{equation}
The PDE from Lemma \ref{Lem2} (iv) is now equivalent to
$\partial_u\lambda+\lambda^2\cos\theta+4\tau^2\cos^3\theta=0,$ for
which the general solution is given by
\begin{equation*}
\lambda(u,v)=2\tau\cos\theta\tan(\varphi(v)-2\tau\cos^2\theta\,u),
\end{equation*}
for some function $\varphi(v)$. We can now solve system \eqref{eq5}.
Remark that we are interested in only one coordinate system on the
surface $M$ and hence we only need one solution for $a$ and $b$, for
example
\begin{align}
&a(u,v)=\frac{1}{\cos\theta}\sin(\varphi(v)-2\tau\cos^2\theta\,u),\label{eq6}\\
& b(u,v)=\cos(\varphi(v)-2\tau\cos^2\theta\,u).\label{eq7}
\end{align}
System \eqref{eq4} is then equivalent to
$$
\left\{\begin{array}{l} \partial_u\phi = -2\tau\cos^2\theta, \\
\partial_v\phi = 0, \end{array}\right.
$$
for which the general solution is given by
\begin{equation}\phi(u,v)=\phi(u)=-2\tau\cos^2\theta\,u+c,\label{eq8}\end{equation}
where $c$ is a real constant.

To finish the proof, we have to integrate the distribution spanned
by $T$ and $JT$. Denote the parametrization of $M$ by
$$F:U\subseteq\R^2\rightarrow M\subset\Nil:(u,v)\mapsto
F(u,v)=(F_1(u,v),F_2(u,v),F_3(u,v)).$$ Then we know from
\eqref{eq2}, \eqref{eq3} and the choice of coordinates $(u,v)$ that
\begin{align*}
& (\partial_u F_1, \partial_u F_2, \partial_u F_3) = -\sin\theta(\cos\theta\cos\phi\,e_1+\cos\theta\sin\phi\,e_2-\sin\theta\,e_3),\\
& (\partial_v F_1, \partial_v F_2, \partial_v F_3) =
\sin\theta((-a\cos\theta\cos\phi+b\sin\phi)e_1 -
(a\cos\theta\sin\phi+b\cos\phi)e_2+a\sin\theta\,e_3).
\end{align*}
Moreover, at the point $(F_1,F_2,F_3)$, we have $e_1=(1,0,-\tau
F_2)$, $e_2=(0,1,\tau F_1)$, $e_3=(0,0,1)$ and $a$, $b$ and $\phi$
are given by \eqref{eq6}, \eqref{eq7} and \eqref{eq8}. Direct
integration, followed by the reparametrization
$-2\tau\cos^2\theta\,u+c\mapsto u$ yields the parametrization given
in the theorem, where $f_1(v)$ and $f_2(v)$ are primitive functions
of $\sin\theta\sin(c-\varphi(v))$ and
$\sin\theta\cos\left(c-\varphi(v)\right)$ respectively.
\end{proof}

\begin{remark}
We may assume that $f_1(v)$ and $f_2(v)$ are polynomials of degree
at most one, by changing the $v$-coordinate if necessary. $f_3(v)$
is then a polynomial of degree at most two.
\end{remark}

We end this section with a concrete example of a non-trivial
constant angle surface in $\Nil$.
\begin{example}
Take $f_1(v)=f_3(v)=0$ and $f_2(v)=\frac{1}{\sqrt{2}}v$. Then it
follows from Theorem \ref{Theo1} that the surface
$$F(u,v)=\left(\frac{1}{2\tau}\sin u, -\frac{1}{2\tau}\cos u + \frac{1}{\sqrt{2}}v, -\frac{1}{4\tau}u-\frac{1}{2\sqrt{2}}v\sin
u\right)$$ is a constant angle surface in $\Nil$, with
$\theta=\frac{\pi}4$. This surface is a ruled surface based on a
helix.
\end{example}

\section{A possible generalization to BCV-spaces}

\subsection{Bianchi-Cartan-Vranceanu spaces}

Constant angle surfaces have now been classified in the homogeneous
3-spaces $\E^3$, $\mathbb{S}^2\times\mathbb{R}$,
$\mathbb{H}^2\times\mathbb{R}$ and $\Nil$. All of these spaces
belong to a larger family, called the BCV-spaces.

\begin{definition} Let $\kappa$ and $\tau$ be real numbers, with $\tau\geq 0$.
The \emph{Bianchi-Cartan-Vranceanu space (BCV-space)}
$\Mt^3(\kappa,\tau)$ is defined as the set
$$\left\{(x,y,z)\in\R^3\ \left|\ 1+\frac{\kappa}{4}(x^2+y^2)>0\right.\right\}$$
equipped with the metric
$$ds^2=\frac{dx^2+dy^2}{(1+\frac{\kappa}{4}(x^2+y^2))^2}+\left(dz+\tau\frac{y\,dx-x\,dy}{1+\frac{\kappa}{4}(x^2+y^2)}\right)^2.$$
\end{definition}

It was Cartan who obtained this family of spaces, with $\kappa\neq
4\tau^2$, as the result of the classification of 3-dimensional
Riemannian manifolds with 4-dimensional isometry group. They also
appear in the work of Bianchi and Vranceanu. The complete
classification of BCV-spaces is as follows:
\begin{itemize}
\item if $\kappa=\tau=0$, then $\Mt^3(\kappa,\tau)\cong \E^3$,
\item if $\kappa=4\tau^2\neq 0$, then $\Mt^3(\kappa,\tau)\cong \S^3(\kappa/4)\setminus\{\infty\}$,
\item if $\kappa >0$ and $\tau=0$, then $\Mt^3(\kappa,\tau)\cong (\S^2(\kappa)\setminus\{\infty\})\times\R$,
\item if $\kappa <0$ and $\tau=0$, then $\Mt^3(\kappa,\tau)\cong \H^2(\kappa)\times\R$,
\item if $\kappa >0$ and $\tau\neq 0$, then $\Mt^3(\kappa,\tau)\cong\mathrm{SU}(2)\setminus\{\infty\}$,
\item if $\kappa <0$ and $\tau\neq 0$, then $\Mt^3(\kappa,\tau)\cong\widetilde{\mathrm{SL}}(2,\R)$,
\item if $\kappa =0$ and $\tau\neq 0$, then $\Mt^3(\kappa,\tau)\cong \Nil$.
\end{itemize}
The Lie groups in the classification are equipped with a standard
left-invariant metric. This classification implies that
$\Mt^3(\kappa,\tau)$ has constant sectional curvature if and only if
$\kappa=4\tau^2$. The curvature is then equal to $\frac \kappa
4=\tau^2\geq 0$. The following lemma generalizes Lemma \ref{Lem1}.

\begin{lemma}\label{Lem3}
The following vector fields form an orthonormal frame on
$\Mt^3(\kappa,\tau)$: $$e_1 =
\left(1+\frac{\kappa}{4}(x^2+y^2)\right)\partial_x-\tau
y\partial_z,\quad e_2 =
\left(1+\frac{\kappa}{4}(x^2+y^2)\right)\partial_y+\tau
x\partial_z,\quad e_3 = \partial_z.$$ The geometry of the BCV-space
can be described in terms of this frame as follows.
\begin{itemize}
\item[(i)] These vector fields satisfy the commutation relations
$$[e_1,e_2]=-\frac{\kappa}{2}ye_1+\frac{\kappa}{2}xe_2+2\tau e_3,
\qquad [e_2,e_3]=0, \qquad [e_3,e_1]=0.$$
\item[(ii)] The Levi Civita connection $\nt$ of $\Mt^3(\kappa,\tau)$ is given
by
$$\begin{array}{lll} \widetilde{\nabla}_{e_1}e_1=\frac{\kappa}{2}ye_2, & \widetilde{\nabla}_{e_1}e_2=-\frac{\kappa}{2}ye_1+\tau e_3, & \widetilde{\nabla}_{e_1}e_3=-\tau e_2, \\
                   \widetilde{\nabla}_{e_2}e_1=-\frac{\kappa}{2}xe_2-\tau e_3, & \widetilde{\nabla}_{e_2}e_2=\frac{\kappa}{2}xe_1, & \widetilde{\nabla}_{e_2}e_3=\tau e_1, \\
                   \widetilde{\nabla}_{e_3}e_1=-\tau e_2, & \widetilde{\nabla}_{e_3}e_2=\tau e_1, & \widetilde{\nabla}_{e_3}e_3=0.
                   \end{array}$$
\item[(iii)] The Riemann Christoffel  curvature tensor $\Rt$ of
$\Mt^3(\kappa,\tau)$ is determined by
\begin{eqnarray*}
\widetilde{R}(X,Y)Z &=& (\kappa-3\tau^2)(\langle Y,Z\rangle X-\langle X,Z\rangle Y)\\
&& -(\kappa-4\tau^2)(\langle Y,e_3 \rangle\langle Z,e_3\rangle X - \langle X,e_3 \rangle\langle Z,e_3 \rangle Y\\
&& + \langle X,e_3 \rangle\langle Y,Z \rangle e_3 - \langle Y,e_3
\rangle\langle X,Z \rangle e_3),
\end{eqnarray*}
for $p\in \Mt^3(\kappa,\tau)$ and $X,Y,Z\in T_p\Mt^3(\kappa,\tau)$.
\end{itemize}
\end{lemma}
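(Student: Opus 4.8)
The plan is to prove this exactly as Lemma \ref{Lem1} is proven, treating $\kappa$ as a new parameter that reduces to the Heisenberg case when $\kappa=0$. First I would verify that $e_1,e_2,e_3$ form an orthonormal frame by computing $ds^2(e_i,e_j)$ directly from the definition of the metric. Writing $F=1+\frac{\kappa}{4}(x^2+y^2)$ for brevity, one checks that the dual coframe is $\theta^1=F^{-1}dx$, $\theta^2=F^{-1}dy$, $\theta^3=dz+\tau F^{-1}(y\,dx-x\,dy)$, so that $ds^2=(\theta^1)^2+(\theta^2)^2+(\theta^3)^2$ and $\{e_1,e_2,e_3\}$ is indeed orthonormal. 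This is a routine but necessary bookkeeping step.

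For part (i), I would compute the Lie brackets $[e_i,e_j]$ of the explicit vector fields. The only nontrivial one is $[e_1,e_2]$: applying $e_1e_2-e_2e_1$ to the coordinate functions and using $\partial_x F=\frac{\kappa}{2}x$, $\partial_y F=\frac{\kappa}{2}y$ produces the stated combination $-\frac{\kappa}{2}y\,e_1+\frac{\kappa}{2}x\,e_2+2\tau e_3$; the brackets $[e_2,e_3]$ and $[e_3,e_1]$ vanish since $e_3=\partial_z$ and the coefficients of $e_1,e_2$ do not depend on $z$. For part (ii), I would use the Koszul formula together with orthonormality and the bracket relations from (i): since the frame is orthonormal, $2\langle\widetilde\nabla_{e_i}e_j,e_k\rangle=\langle[e_i,e_j],e_k\rangle-\langle[e_j,e_k],e_i\rangle+\langle[e_k,e_i],e_j\rangle$, and plugging in the brackets yields all nine covariant derivatives. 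Finally, part (iii) follows by computing $\widetilde R(e_i,e_j)e_k=\widetilde\nabla_{e_i}\widetilde\nabla_{e_j}e_k-\widetilde\nabla_{e_j}\widetilde\nabla_{e_i}e_k-\widetilde\nabla_{[e_i,e_j]}e_k$ on the basis vectors using (i) and (ii), and then checking that the resulting values agree with the proposed tensorial expression evaluated on the $e_i$; by multilinearity this identifies the tensor on all of $T_p\Mt^3(\kappa,\tau)$.

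The computations are entirely elementary, so the main obstacle is simply organizational: keeping track of the $x$- and $y$-dependent coefficients $\frac{\kappa}{2}x$ and $\frac{\kappa}{2}y$ through the Koszul formula and, especially, through the second covariant derivatives in part (iii), where terms like $\widetilde\nabla_{e_1}(\frac{\kappa}{2}y\,e_2)$ require differentiating the coefficient $\frac{\kappa}{2}y$ along $e_1$ (which contributes $e_1[\frac{\kappa}{2}y]=0$ here, but analogous terms with $e_1[\frac{\kappa}{2}x]=\frac{\kappa}{2}F\neq 0$ do appear). One must be careful that the many terms conspire to give a tensor depending only on $\kappa$, $\tau$ and $e_3$, with no residual $x$- or $y$-dependence — this is guaranteed a priori by homogeneity but is the only place where a sign slip would be easy to make. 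As a consistency check, setting $\kappa=0$ must reproduce Lemma \ref{Lem1}, and setting $\kappa=4\tau^2$ must give constant curvature $\tau^2$; both are immediate from the stated formulas.
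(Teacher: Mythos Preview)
Your proposal is correct and is exactly the standard computation one would carry out; the paper itself gives no proof of this lemma (nor of Lemma~\ref{Lem1}), simply stating the result as a routine generalization of Lemma~\ref{Lem1}. Your outlined steps---verifying orthonormality via the dual coframe, computing the brackets directly, applying Koszul's formula, and then evaluating the curvature tensor on basis vectors---are precisely how one fills in the details, and your consistency checks at $\kappa=0$ and $\kappa=4\tau^2$ are good sanity tests.
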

Also the notions of Hopf fibration and Hopf cylinder can be
generalized to BCV-spaces.
\begin{definition} Let $\Mt^2(\kappa)$ be the following Riemannian surface
of constant Gaussian curvature $\kappa$:
$$\Mt^2(\kappa)=\left(\left\{(x,y)\in\R^2\ \left|\ 1+\frac{\kappa}{4}(x^2+y^2)>0\right.\right\}\ ,\ \frac{dx^2+dy^2}{(1+\frac{\kappa}{4}(x^2+y^2))^2}\right).$$
Then the mapping
$\pi:\Mt^3(\kappa,\tau)\rightarrow\Mt^2(\kappa):(x,y,z)\mapsto
(x,y)$ is a Riemannian submersion, called the \emph{Hopf-fibration}.
The inverse image of a curve in $\Mt^2(\kappa)$ under $\pi$ is
called a \emph{Hopf-cylinder} and by a \emph{leaf} of the
Hopf-fibration one denotes a surface which is everywhere orthogonal
to the fibres of $\pi$.\end{definition}

Remark that in the special case that $\kappa=4\tau^2\neq 0$, $\pi$
coincides locally with the classical Hopf-fibration
$\pi:\S^3\left(\kappa/4\right)\rightarrow\S^2(\kappa)$. From the
theorem of Frobenius and Lemma \ref{Lem3} (i), it is clear that
leaves of $\pi$ only exist if $\tau=0$. They are nothing but open
parts of surfaces of type $\E^2\times\{t_0\}$,
$(\S^2(\kappa)\setminus\{\infty\})\times\{t_0\}$ or
$\H^2(\kappa)\times\{t_0\}$.

\subsection{Constant angle surfaces in BCV-spaces}
Let $F:M\rightarrow\Mt^3(\kappa,\tau)$ be an isometric immersion of
an oriented surface in a BCV-space. We can define $N$, $S$, $\theta$
and $T$ as in the case of $\Nil$. The four compatibility equations
become
\begin{align*}
& \nabla_XSY-\nabla_YSX-S[X,Y]=(\kappa-4\tau^2)\cos\theta(\langle Y,T\rangle X-\langle X,T\rangle Y),\\
& K=\det S+\tau^2+(\kappa-4\tau^2)\cos^2\theta,\\
& \nabla_XT = \cos\theta(SX-\tau JX),\\
& X[\cos\theta] = -\langle SX-\tau JX,T\rangle.
\end{align*}

One can define constant angle surfaces in a general BCV-space in the
same way as in the Heisenberg group. In the special case that the
BCV-space is $\S^2\times\R$ or $\H^2\times\R$, this definition
coincides with the ones used in \cite{DFVdVV} and \cite{DM1}. Also
Lemma \ref{Lem2} can be easily adapted.

\begin{lemma}\label{Lem4}
Let $M$ be a constant angle surface in a general BCV-space
$\widetilde{M}(\kappa,\tau)$. Then the following statements hold.
\begin{itemize}
\item[(i)] The shape operator with respect to $\{T,JT\}$ is the
same as in Lemma \emph{\ref{Lem2} (i)}.
\item[(ii)] The Levi Civita connection of the surface is determined by the same formulas as in Lemma \emph{\ref{Lem2}
(ii)}.
\item[(iii)] The Gaussian curvature of the surface is a constant given by
$$K=(\kappa-4\tau^2)\cos^2\theta.$$
\item[(iv)] The function $\lambda$ satisfies the following PDE:
$$T[\lambda]+\lambda^2\cos\theta+\kappa\cos\theta\sin^2\theta+4\tau^2\cos^3\theta=0.$$
\end{itemize}
\end{lemma}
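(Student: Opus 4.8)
The plan is to imitate the proof of Lemma~\ref{Lem2} almost verbatim, replacing the $\Nil$-compatibility equations by the BCV-compatibility equations displayed just above. The point is that the only way the ambient curvature enters is through the Gauss and Codazzi equations, and the extra $\kappa$-terms there are precisely what modifies statements (iii) and (iv); the two purely ``connection'' statements (i) and (ii) are literally unchanged. Throughout one argues at points where $\theta\neq 0$, so that $T\neq 0$; the locus $\theta\equiv 0$ gives leaves of the Hopf fibration, which occur only for $\tau=0$ and are surfaces of constant curvature $\kappa$, consistent with (iii).

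First I would prove (i): since $\theta$ is constant, $X[\cos\theta]=0$ for every tangent vector $X$, so the fourth compatibility equation forces $\langle SX-\tau JX,\,T\rangle=0$ for all $X$. Evaluating at $X=T$ and $X=JT$ and using $\langle T,T\rangle=\langle JT,JT\rangle=\sin^2\theta$, $\langle T,JT\rangle=0$, $J(JT)=-T$ and the symmetry of $S$, one obtains $\langle ST,T\rangle=0$ and $\langle ST,JT\rangle=\langle S(JT),T\rangle=-\tau\sin^2\theta$, which is exactly the matrix of Lemma~\ref{Lem2}~(i); this argument never mentions $\kappa$. For (ii), I would substitute that shape operator into $\nabla_XT=\cos\theta(SX-\tau JX)$ with $X=T,JT$ to get $\nabla_TT=-2\tau\cos\theta\,JT$ and $\nabla_{JT}T=\lambda\cos\theta\,JT$, and then determine $\nabla_TJT$ and $\nabla_{JT}JT$ by pairing with $T$ and $JT$ and using metric compatibility together with the constancy of $\theta$; this reproduces Lemma~\ref{Lem2}~(ii). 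For (iii), $\det S=-\tau^2$ (the determinant of an endomorphism being basis-independent), so the Gauss compatibility equation $K=\det S+\tau^2+(\kappa-4\tau^2)\cos^2\theta$ gives immediately $K=(\kappa-4\tau^2)\cos^2\theta$, a constant.

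It remains to prove (iv). Following Lemma~\ref{Lem2}, I would evaluate the Codazzi compatibility equation at $X=T$, $Y=JT$: using $S(JT)=-\tau T+\lambda JT$, the connection coefficients from (ii), and $[T,JT]=\nabla_TJT-\nabla_{JT}T=2\tau\cos\theta\,T-\lambda\cos\theta\,JT$, a short computation collapses the left-hand side to $\bigl(T[\lambda]+\lambda^2\cos\theta+4\tau^2\cos\theta\bigr)JT$ (the $T$-components cancel), while the right-hand side equals $-(\kappa-4\tau^2)\cos\theta\sin^2\theta\,JT$; rearranging and using $4\tau^2\cos\theta-4\tau^2\cos\theta\sin^2\theta=4\tau^2\cos^3\theta$ yields $T[\lambda]+\lambda^2\cos\theta+\kappa\cos\theta\sin^2\theta+4\tau^2\cos^3\theta=0$. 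Alternatively one gets the same PDE by computing $K=\langle R(T,JT)JT,T\rangle/\sin^4\theta$ directly from the connection in (ii) and comparing with (iii), the term $T[\lambda]$ entering via $\nabla_T\nabla_{JT}JT$. There is no genuine obstacle here; the only thing demanding care is the bookkeeping — checking that the $\kappa$-dependent corrections to the Gauss and Codazzi equations relative to the $\Nil$-case propagate exactly into the formulas in (iii) and (iv) and that nothing else changes.
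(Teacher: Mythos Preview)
Your proposal is correct and follows essentially the same route as the paper: the paper does not write out a separate proof for Lemma~\ref{Lem4} but simply indicates that Lemma~\ref{Lem2} ``can be easily adapted,'' and the proof of Lemma~\ref{Lem2} derives (i) from the structure equation for $X[\cos\theta]$ and symmetry of $S$, (ii) from the structure equation for $\nabla_XT$ together with (i), (iii) from Gauss and (i), and (iv) from Codazzi and (i) (or equivalently from (ii)--(iii))---exactly what you do, with the $\kappa$-terms inserted in the right places.
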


In a further study of constant angle surfaces in BCV-spaces, one may
assume that $\kappa\neq 0$ and $\tau\neq 0$, since in all other
cases there is now a complete classification. We see that $\theta=0$
cannot occur, since the distribution spanned by $e_1$ and $e_2$ is
not integrable, and that $\theta=\frac{\pi}{2}$ gives a Hopf
cylinder.

In the final remark, we give some partial results that may lead to a
complete classification.

\begin{remark}
To get an explicit local classification for a constant angle surface
with $\theta\in] 0,\frac{\pi}{2}[$ in a BCV-space with
$\kappa\tau\neq 0$, we solve the PDE for $\lambda$, given in Lemma
\ref{Lem4} (iv), by choosing local coordinates $(u,v)$ on the
surface, for instance such that $T=\partial_u$ and
$\partial_v=aT+bJT$. This solution depends on the sign of
$\kappa\sin^2\theta+4\tau^2\cos^2\theta$. If we suppose that we are
in the case that this is strictly positive and denote this constant
by $r^2$, then we find
\begin{align*}
& \lambda(u,v)=r\tan(\varphi(v)-r\cos\theta u),\\
& a(u,v)=\frac{2\tau}r\sin(\varphi(v)-r\cos\theta u),\\
& b(u,v)=\cos(\varphi(v)-r\cos\theta u)
\end{align*}
for some function $\varphi(v)$. Using the same notations as in the
previous section, this means that we have to solve the system
\begin{align}
& \phi_u=-\frac \kappa 2\sin\theta\cos\theta(F_1\sin\phi-F_2\cos\phi)-2\tau\cos^2\theta,\label{phiu}\\
& \phi_v=a(u,v)\phi_u+b(u,v)\left(\lambda(u,v)-\frac \kappa 2\sin\theta(F_1\cos\phi+F_2\sin\phi)\right),\label{phiv}\\
& (F_1)_u=-\sin\theta\cos\theta\cos\phi\left(1+\frac \kappa 4 (F_1^2+F_2^2)\right),\label{F1u}\\
& (F_1)_v=a(u,v)(F_1)_u+b(u,v)\sin\theta\sin\phi\left(1+\frac \kappa 4 (F_1^2+F_2^2)\right) \label{F1v},\\
& (F_2)_u=-\sin\theta\cos\theta\sin\phi\left(1+\frac \kappa 4 (F_1^2+F_2^2)\right), \label{F2u}\\
& (F_2)_v=a(u,v)(F_2)_u-b(u,v)\sin\theta\cos\phi\left(1+\frac \kappa
4 (F_1^2+F_2^2)\right) \label{F2v},
\end{align}
\begin{align}
& (F_3)_u=-\sin\theta (-\tau F_2 \cos\theta\cos\phi +\tau F_1 \cos\theta\sin\phi -\sin\theta), \label{F3u}\\
& (F_3)_v=a(u,v)(F_3)_u-b(u,v) \tau \sin\theta
(F_2\sin\phi+F_1\cos\phi)\label{F3v}.
\end{align}
By solving \eqref{phiu}, \eqref{F1u} and \eqref{F2u}, we obtain
\begin{align*}
& F_1=\ \frac{\sin 2\theta}{2D(v)}\sin\phi+L(v)\cos(\rho(v)),\\
& F_2=-\frac{\sin 2\theta}{2D(v)}\cos\phi+L(v)\sin(\rho(v)),\\
&\phi=\rho(v)+2\arctan\left(\frac{-A+\sqrt{B^2-A^2}\tan\left(-\frac12\sqrt{B^2-A^2}\
u+C(v)\right)}{B}\right),
\end{align*}
where $D(v)$, $L(v)$, $\rho(v)$ and $C(v)$  are integration
constants and $$A(v)=\frac \kappa 4\sin2\theta\ L(v),\qquad
B(v)=D(v)+\frac \kappa 4\left(\frac{\sin^2
2\theta}{4D(v)}+D(v)L^2(v)\right).$$ Remark here that
$B^2-A^2=r^2\cos^2\theta$ is a strictly positive constant. When we
substitute these solutions in the remaining equations \eqref{phiv},
\eqref{F1v}, \eqref{F2v}, \eqref{F3u} and \eqref{F3v}, calculations
get rather complicated. However, we hope that this partial results
can inspire other mathematicians to construct examples of constant
angle surfaces in a general BCV-space or even to obtain a full
classification.
\end{remark}


\begin{thebibliography}{9999}

\bibitem{D}
Daniel, B., \emph{Isometric immersions into 3-dimensional
homogeneous manifolds}, Comment. Math. Helv. \textbf{82} (2007),
87--131.

\bibitem{DFVdVV}
Dillen, F., Fastenakels, J. , Van der Veken, J., and Vrancken, L.,
\emph{Constant angle surfaces in $\S^2\times\R$},  Monatsh. Math.
{\bf 152} (2007), 89--96.

\bibitem{DM1}
Dillen, F., and Munteanu, M. I., \emph{Constant angle surfaces in
$\H^2\times\R$}, Bull. Braz. Math. Soc. N. S. {\bf 40} (2009),
85--97.

\bibitem{DM2}
Dillen, F., and Munteanu, M. I., \emph{Surfaces in $\H^+ \times
\R$}, in: Pure and Applied Differential Geometry -- PADGE 2007 (eds.
F.~Dillen and I.~Van de Woestyne, Shaker Verlag, Aachen, 2007),
185--193.

\bibitem{MN}
Munteanu, M. I., and Nistor, A. I., \emph{A new approach on constant
angle surfaces in $\E^3$}, Turkish J. Math. {\bf 33} (2009),
168--178.

\end{thebibliography}
\end{document}